\documentclass[a4paper,11pt]{amsart}
\usepackage{hyperref,latexsym}
\usepackage{enumerate}
\usepackage{graphicx}
\usepackage{color}

\theoremstyle{plain}
\newtheorem{theorem}{Theorem}[section]
\newtheorem{lemma}[theorem]{Lemma}
\newtheorem{corollary}[theorem]{Corollary}
\newtheorem{proposition}[theorem]{Proposition}
\theoremstyle{definition}

\theoremstyle{remark}

\begin{document}

\title[]
      {Dan Reznik's identities and more}

\date{}
\author{Misha Bialy}
\address{School of Mathematical Sciences, Raymond and Beverly Sackler Faculty of Exact Sciences, Tel Aviv University,
Israel} 
\email{bialy@post.tau.ac.il}
\thanks{MB was partially supported by ISF grant 162/15.}

\author{serge Tabachnikov}
\address{Department of Mathematics,
Penn State University,
University Park, PA 16802, USA}
\email{tabachni@math.psu.edu}
\thanks{ST was supported by NSF grant DMS-1510055.}


\begin{abstract}
Dan Reznik found, by computer experimentation, a number of conserved quantities associated with periodic billiard trajectories in ellipses. We prove some of his observations using a non-standard generating function for the billiard ball map. In this way, we also obtain some identities valid for all smooth convex billiard tables.
\end{abstract}

\maketitle


\section{Introduction}
In this note we present an alternative approach to  remarkable conservation laws for families of billiard polygons in ellipses. They were discovered by Dan Reznik \cite{reznik} in his computer experiments and proved in \cite {AST}.
Here we discuss a different method of proofs based on a non-standard generating function for convex billiards discovered in \cite{BM,B}. This approach gives some identities valid for the billiard inside any smooth convex curve, see Theorem \ref{thm1}. We hope that this approach will be useful in the study of other problems on billiards. 

We also prove several other conservation laws  found by Reznik in further abundant computer experiments \cite{reznik2}.

\section{Non-standard generating function and billiard polygons in convex billiards}

Consider  the space of oriented lines in the plane $\mathbb R^2(x_1,x_2)$. A line can be written as 
$$\cos\varphi \cdot x_1+\sin\varphi \cdot x_2=p,$$
where $\varphi$ is the direction of the right normal to the oriented line. Thus  $(p,\varphi) $ are coordinates in the space of oriented lines, see Figure \ref{lines}. The 2-form $\omega=dp \wedge d\varphi$ is the area (symplectic) form on the space of oriented lines used in geometrical optics and integral geometry.

\begin{figure}[h]
	\centering
	\includegraphics[width=0.2\linewidth]{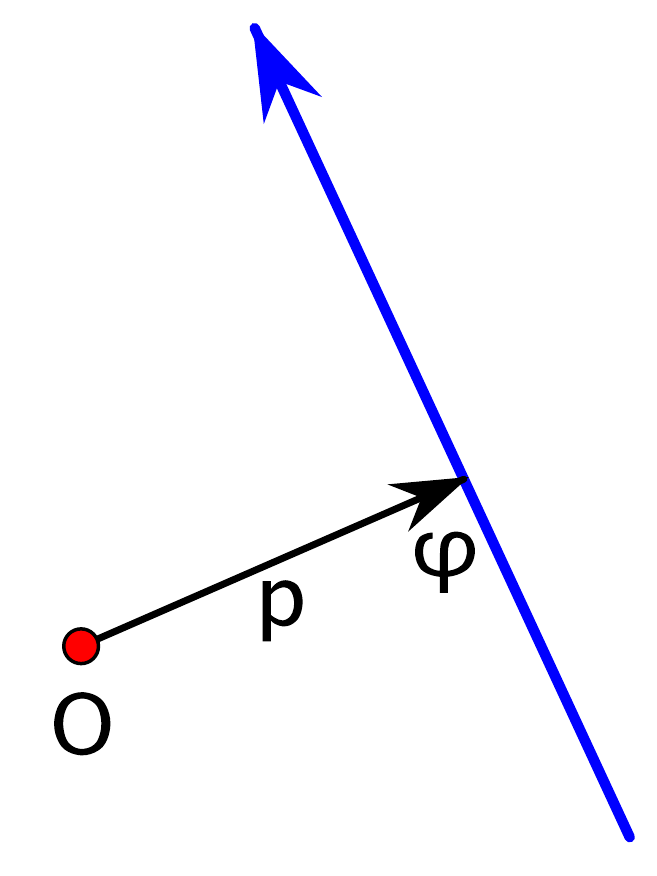}
	\caption{Coordinates in the space of oriented lines.}
	\label{lines}
\end{figure}

Consider a smooth strictly convex billiard curve $\gamma$, and let $h(\varphi)$ be its support function, that is, the signed distance from the origin to the tangent line to $\gamma$ at the point where the outer normal has direction $\varphi$.

The billiard transformation
$$
T: (p_1,\varphi_1)\mapsto (p_2,\varphi_2)
$$
sends the incoming trajectory to the outgoing one. Let  
$$
\psi= \frac{\varphi_1+\varphi_2}{2}, \ \delta=\frac{\varphi_2-\varphi_1}{2},
$$
where $\psi$ is the direction of the outer normal at the reflection point and  $\delta$ is the reflection angle. 

\begin{proposition} \label{prop:genfunct}
The function
$$
S(\varphi_1,\varphi_2)=
2h\left(\frac{\varphi_1+\varphi_2}{2}\right)
\sin\left(\frac{\varphi_2-\varphi_1}{2}\right)=
2h(\psi)
\sin\delta$$
is a generating function of the billiard transformation, that is, $T(p_1,\varphi_1)= (p_2,\varphi_2)$ if and only if
$$ -\frac{\partial S_1(\varphi_1,\varphi_2)}{\partial \varphi_1}=p_1,\quad 
\frac{\partial S_2(\varphi_1,\varphi_2)}{\partial \varphi_2}=p_2.
$$
\end{proposition}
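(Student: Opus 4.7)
The plan is to verify the generating-function identity by a direct calculation, using the parametrization of the boundary by the direction of the outer normal, which is natural because the reflection law becomes trivial in these coordinates.

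First, I would parametrize $\gamma$ by the outer normal angle: the support function $h(\psi)$ gives the point
\[
\gamma(\psi)=\bigl(h(\psi)\cos\psi-h'(\psi)\sin\psi,\;h(\psi)\sin\psi+h'(\psi)\cos\psi\bigr).
\]
In these coordinates the reflection law takes the clean form $\psi=(\varphi_1+\varphi_2)/2$: the two chord lines with outer normal directions $\varphi_1,\varphi_2$ meet at a boundary point whose outward normal $\psi$ bisects them, with reflection angle $\delta=(\varphi_2-\varphi_1)/2$. Thus the condition $T(p_1,\varphi_1)=(p_2,\varphi_2)$ is equivalent to saying both lines pass through $\gamma(\psi)$.

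Second, I would read off $p_1$ and $p_2$ from the line equation $\cos\varphi_i\cdot x_1+\sin\varphi_i\cdot x_2=p_i$ evaluated at $\gamma(\psi)$. Using $\varphi_1=\psi-\delta$ and $\varphi_2=\psi+\delta$, the angle-sum identities collapse the expression to
\[
p_1=h(\psi)\cos\delta-h'(\psi)\sin\delta,\qquad p_2=h(\psi)\cos\delta+h'(\psi)\sin\delta.
\]
This uses only the formula for $\gamma(\psi)$ and standard trigonometric identities.

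Third, I would differentiate $S=2h(\psi)\sin\delta$ with respect to $\varphi_1$ and $\varphi_2$ by the chain rule, using $\partial\psi/\partial\varphi_i=1/2$ and $\partial\delta/\partial\varphi_1=-1/2$, $\partial\delta/\partial\varphi_2=1/2$. This immediately yields
\[
-\frac{\partial S}{\partial \varphi_1}=h(\psi)\cos\delta-h'(\psi)\sin\delta,\qquad
\frac{\partial S}{\partial \varphi_2}=h(\psi)\cos\delta+h'(\psi)\sin\delta,
\]
matching the expressions for $p_1,p_2$ above. The ``if and only if'' follows because both sides encode the same two equations linking $(p_1,p_2)$ to $(\varphi_1,\varphi_2)$, and strict convexity ensures that $\psi\mapsto\gamma(\psi)$ is a bijection onto $\gamma$.

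There is no real obstacle here; the whole proposition is a short computation, and the only non-obvious ingredient is the choice of parametrization by the normal angle, which makes the reflection law algebraically trivial and aligns the variables $(\varphi_1,\varphi_2)$ with the natural arguments of a generating function in the symplectic coordinates $(p,\varphi)$.
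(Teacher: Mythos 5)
Your proof is correct and takes essentially the same route as the paper: both use the support-function parametrization $\gamma(\psi)=h(\psi)(\cos\psi,\sin\psi)+h'(\psi)(-\sin\psi,\cos\psi)$, read off $p_i=\gamma(\psi)\cdot(\cos\varphi_i,\sin\varphi_i)=h(\psi)\cos\delta\mp h'(\psi)\sin\delta$, and match these against the chain-rule derivatives of $S=2h(\psi)\sin\delta$. Your version is slightly more explicit about the ``if and only if'' direction and about computing $p_2$, which the paper leaves as ``similar,'' but there is no substantive difference.
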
 

\begin{figure}[h]
	\centering
	\includegraphics[width=0.5\linewidth]{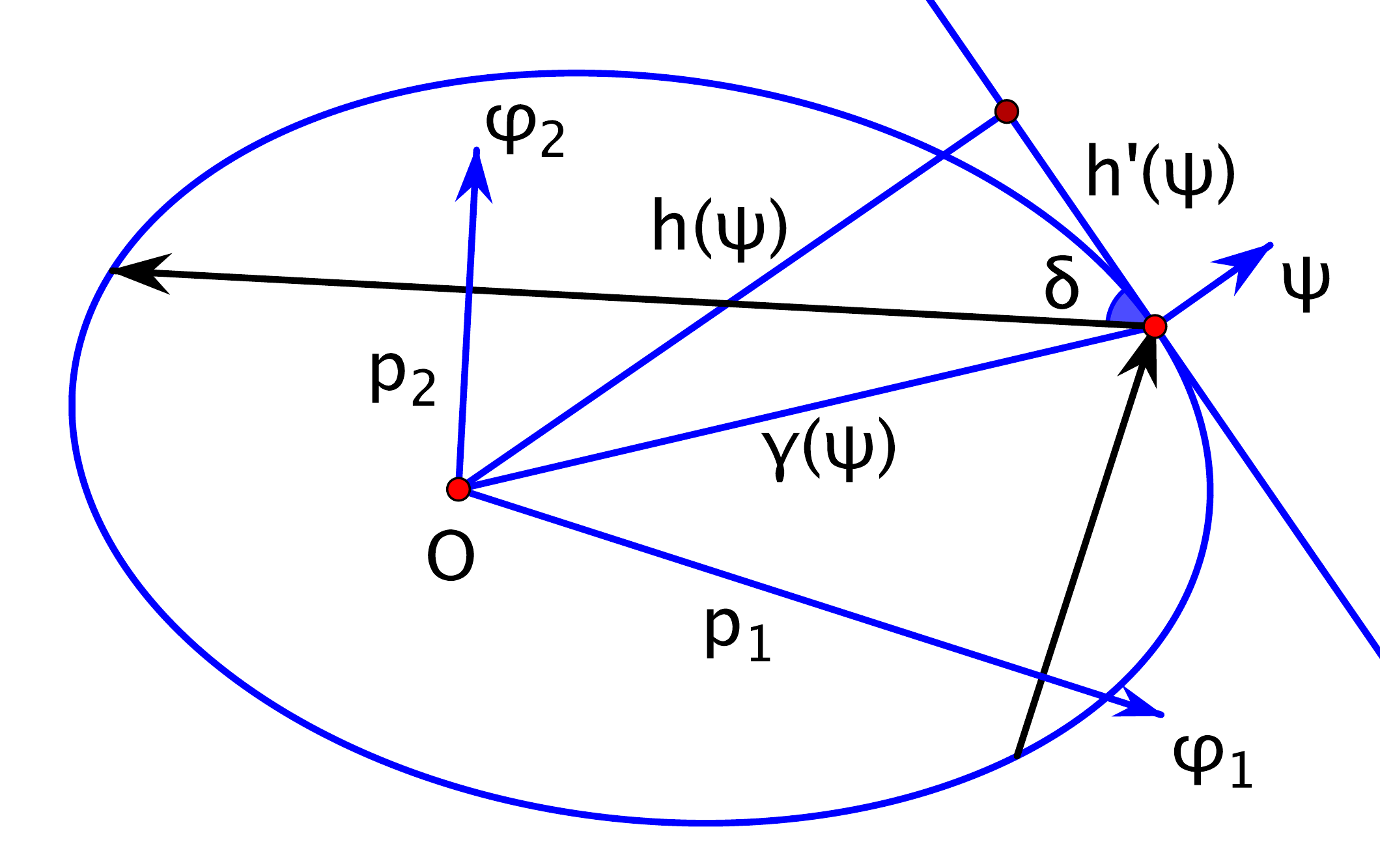}
	\caption{To Proposition \ref{prop:genfunct}.}
	\label{Genfunct}
\end{figure}

\begin{proof} 
We refer to Figure \ref{Genfunct}.

One has
$$
-\frac{\partial S_1(\varphi_1,\varphi_2)}{\partial \varphi_1} = -h'(\psi) \sin \delta + h(\psi) \cos \delta.
$$
The position vector of the point of the curve $\gamma$ with the outer normal having direction $\psi$ is
$$
\gamma(\psi) = h(\psi) (\cos\psi,\sin\psi) + h'(\psi) (-\sin \psi,\cos\psi)
$$
(this formula is well known in convex geometry). Then, using some trigonometry,
$$
p_1 = \gamma(\psi) \cdot (\cos\varphi_1,\sin\varphi_1) = h(\psi) \cos \delta -h'(\psi) \sin \delta,
$$
as needed. The argument for $p_2$ is similar. 
\end{proof}

Let $M_i, i=1,..,n, M_{n+1}=M_1$ be a billiard $n$-gon in $\gamma$. Denote by $\psi_i$ the direction of the outer normal to $\gamma$ at point $M_i$ and by $\delta_i$ the reflection angle at $M_i$. Let $L$ be the perimeter of the $n$-gon.

\begin {theorem}\label{thm1}
Then the following formulas hold:
\begin{enumerate}
\item $\sum_{i=1}^{n} 2h(\psi_i)\sin\delta_i=L$;
\medskip
\item $\sum_{i=1}^{n} h'(\psi_i)\sin\delta_i=0$.
\end{enumerate}
\end{theorem}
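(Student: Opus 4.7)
The plan is to compute both $|M_i M_{i+1}|$ and the signed distance from the origin to the chord $M_iM_{i+1}$ explicitly using the support function representation of $\gamma$, and then sum cyclically so that the resulting coboundary terms telescope. The two projections—tangential and normal to each chord—will yield identities (1) and (2) respectively.

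I would first introduce $\alpha_i$, the direction of the right normal to the oriented chord $M_iM_{i+1}$. The reflection law at $M_i$ gives $\psi_i = (\alpha_{i-1}+\alpha_i)/2$ and $\delta_i = (\alpha_i-\alpha_{i-1})/2$, so $\psi_i - \alpha_i = -\delta_i$ and $\psi_{i+1} - \alpha_i = \delta_{i+1}$. As in the proof of Proposition~\ref{prop:genfunct}, the vertices are
\[
M_j = \gamma(\psi_j) = h(\psi_j)(\cos\psi_j, \sin\psi_j) + h'(\psi_j)(-\sin\psi_j, \cos\psi_j).
\]

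For (1), I would project $M_i$ and $M_{i+1}$ onto the unit tangent $(-\sin\alpha_i, \cos\alpha_i)$ of the chord; the chord length is then the difference of these projections. A direct trigonometric expansion using the angle relations above yields
\[
|M_iM_{i+1}| = h(\psi_i)\sin\delta_i + h(\psi_{i+1})\sin\delta_{i+1} + \bigl[h'(\psi_{i+1})\cos\delta_{i+1} - h'(\psi_i)\cos\delta_i\bigr].
\]
Summing over $i$ cyclically, the bracketed coboundary telescopes to $0$, while each term $h(\psi_j)\sin\delta_j$ is counted twice, delivering $L = 2\sum_i h(\psi_i)\sin\delta_i$.

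For (2), I would use the complementary projection: since $M_i$ and $M_{i+1}$ lie on the common line with right normal direction $\alpha_i$, their inner products with $(\cos\alpha_i, \sin\alpha_i)$ agree (both equal the signed distance from the origin to the chord). The resulting identity
\[
h(\psi_i)\cos\delta_i + h'(\psi_i)\sin\delta_i = h(\psi_{i+1})\cos\delta_{i+1} - h'(\psi_{i+1})\sin\delta_{i+1}
\]
rearranges to $h'(\psi_i)\sin\delta_i + h'(\psi_{i+1})\sin\delta_{i+1} = h(\psi_{i+1})\cos\delta_{i+1} - h(\psi_i)\cos\delta_i$. Summing cyclically, the right-hand side telescopes to $0$ and the left-hand side equals $2\sum_i h'(\psi_i)\sin\delta_i$, which proves (2). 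The only real obstacle is the careful bookkeeping of signs in the angle relations $\psi_i - \alpha_i = -\delta_i$, $\psi_{i+1} - \alpha_i = \delta_{i+1}$; once these are fixed, both identities emerge from the same source—explicit coordinates of consecutive vertices on a shared chord—via cyclic cancellation.
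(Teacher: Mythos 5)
Your proposal is correct, and for part (1) it takes a genuinely different route from the paper. For part (2) you arrive at exactly the paper's key identity
\[
h(\psi_i)\cos\delta_i+ h'(\psi_i)\sin\delta_i=h(\psi_{i+1})\cos\delta_{i+1}- h'(\psi_{i+1})\sin\delta_{i+1},
\]
the paper deriving it from the criticality condition $\partial_{\varphi_i} S(\varphi_{i-1},\varphi_i)=-\partial_{\varphi_i} S(\varphi_{i},\varphi_{i+1})$ of the generating function, and you from the fact that the two endpoints of a chord have the same projection onto its normal; these are the same computation in two guises (it is precisely the content of Proposition \ref{prop:genfunct}), and the cyclic summation is identical. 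For part (1), however, the paper argues cohomologically in the style of Guillemin--Melrose: the two primitives $\lambda_1=p\,d\varphi$ and $\lambda_2=(\cos\delta)\,ds$ of the symplectic form are cohomologous and $T$-invariant, so the actions of the two generating functions $S$ and $\mathcal L$ differ by a coboundary $F\circ T-F$ plus a constant killed by the boundary normalization, whence the sum of $S$ over a periodic orbit equals $L$. You instead compute $|M_iM_{i+1}|$ directly by projecting the support-function formula for the vertices onto the tangent direction of the chord, obtaining
\[
|M_iM_{i+1}| = h(\psi_i)\sin\delta_i + h(\psi_{i+1})\sin\delta_{i+1} + \bigl[h'(\psi_{i+1})\cos\delta_{i+1} - h'(\psi_i)\cos\delta_i\bigr],
\]
and let the bracket telescope. (A sanity check on a circle, $h\equiv R$, $h'\equiv0$, gives the correct chord length $2R\sin\delta$.) Your argument is more elementary and self-contained, and it pleasingly unifies (1) and (2) as the tangential and normal projections of the same vertex formula; the paper's argument is less computational and explains the structural reason the identity holds, namely that the actions of any two generating functions of the same symplectic map agree on periodic orbits up to a universal constant, a principle reusable well beyond this theorem.
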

\begin{proof} 
	1. We use the approach of  \cite{GM}. 
	
The sum in (1) computes the action of the periodic orbit, that is, the sum of the values of the generating function over the orbit.
 We claim that this sum equals $L$, the action of the periodic orbit with the standard generating function $\mathcal{L}$, the length of a segment of a billiard trajectory (see, e.g., \cite{Ta}). 

 Indeed, consider  the 1-forms $\lambda_1=pd\varphi$ and $\lambda_2=(\cos\delta)ds$ (where $s$ is the arc length parameter on $\gamma$), which  are both primitives of the symplectic form $\omega$ invariant under the billiard transformation $T$. 
 
 Furthermore, $\lambda_1$ and $\lambda_2$ are cohomologous on the phase cylinder. This can be  verified by integrating both forms along the boundary of the phase cylinder: both integrals are equal to the arc length of $\gamma$. Thus  $\lambda_2 -\lambda_1 = dF$ for  some function $F$. Let $\alpha = \lambda_2 -\lambda_1$.
 
One has
$$
T^* \lambda_1 - \lambda_1 = dS,\ T^* \lambda_2 - \lambda_2 = d \mathcal{L},
$$
hence 
$$
d(\mathcal{L}-S)	=T^*\alpha-\alpha=d(F\circ T-F).
$$ 
This implies that
$$
\mathcal{L}-S = F\circ T-F+const.
$$
The constant in the right hand side is zero since both $\mathcal{L}$ and $S$ vanish on the boundary. It remains to note that the sums of $F$ and of $F\circ T$ over a periodic orbit are equal.
\medskip

2. Let the edges of a billiard polygon have coordinates $(p_i,\varphi_i)$, and let 
$$
\psi_i=\frac{\varphi_{i-1}+\varphi_i}{2},\  \delta_i=\frac{\varphi_{i}-\varphi_{i-1}}{2}.
$$
Then 
$$
\frac{\partial S(\varphi_{i-1},\varphi_i)}{\partial \varphi_i}=-\frac{\partial S(\varphi_{i},\varphi_{i+1})}{\partial \varphi_i},
$$
that is, 
\begin{equation}\label{h}
h(\psi_i)\cos\delta_i+ h'(\psi_i)\sin\delta_i=h(\psi_{i+1})\cos\delta_{i+1}- h'(\psi_{i+1})\sin\delta_{i+1}.
\end{equation}
Summing up these equations for $i=1,2,\dots,n$ gives the second statement. 
	\end{proof}


\section{Specializing to ellipses}

Let $\gamma$ be the ellipse $\{\frac{x_1^2}{a_1^2}+\frac{x_2^2}{a_2^2}=1\}$. We will need the support function of $\gamma$, with the origin at the center of the ellipse and the angles $\psi$  made with the positive $x_1$-axis.

\begin{lemma} \label{lemma:supel}
One has: 
$$
h(\psi) = \sqrt{a_1^2 \cos^2\psi + a_2^2 \sin^2\psi}.
$$
\end{lemma}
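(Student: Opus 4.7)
The plan is to compute the support function directly from its definition as the projection of the position vector of a boundary point onto the outward unit normal at that point. I will parametrize the ellipse by $(x_1,x_2)=(a_1\cos t,\,a_2\sin t)$, express the outward normal direction at such a point, invert the relationship to identify the parameter $t$ corresponding to a given normal angle $\psi$, and then evaluate the dot product.

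First, I compute the gradient of $F(x_1,x_2)=x_1^2/a_1^2+x_2^2/a_2^2$ at the point, which is proportional to $(\cos t/a_1,\sin t/a_2)$. Hence the outward unit normal is
$$
(\cos\psi,\sin\psi)=\frac{1}{N}\Bigl(\frac{\cos t}{a_1},\frac{\sin t}{a_2}\Bigr),\qquad N=\sqrt{\frac{\cos^2 t}{a_1^2}+\frac{\sin^2 t}{a_2^2}}.
$$
Solving for $\cos t,\sin t$ gives $\cos t=a_1 N\cos\psi$ and $\sin t=a_2 N\sin\psi$, and substituting into the Pythagorean identity yields $N^{-1}=\sqrt{a_1^2\cos^2\psi+a_2^2\sin^2\psi}$.

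Next, I use the fact that $h(\psi)$ equals the dot product of the position vector $(a_1\cos t,a_2\sin t)$ with the outward unit normal $(\cos\psi,\sin\psi)$. A short calculation gives
$$
h(\psi)=a_1\cos t\cos\psi+a_2\sin t\sin\psi=N\bigl(a_1^2\cos^2\psi+a_2^2\sin^2\psi\bigr)=\sqrt{a_1^2\cos^2\psi+a_2^2\sin^2\psi},
$$
as required. There is no substantive obstacle; the only minor point is to note that for the ellipse the origin lies inside $\gamma$, so the signed distance to each tangent line coincides with the unsigned projection of the position vector onto the outward normal, justifying the absence of a sign ambiguity.
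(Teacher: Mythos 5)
Your proof is correct and follows essentially the same route as the paper's: both identify the boundary point whose outward normal has direction $\psi$ by inverting the normal-direction relation, and then evaluate the support value there (you via the dot product $\gamma\cdot(\cos\psi,\sin\psi)$ with the parametric point $(a_1\cos t, a_2\sin t)$, the paper via the distance from the origin to the tangent line at the implicit point $(\xi_1,\xi_2)$ -- two equivalent characterizations). The computations match step for step, and your closing remark about the sign is a reasonable extra precaution.
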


\begin{proof}
Consider point $(\xi_1,\xi_2)$ of the ellipse. A normal vector is given by
$$
N=\left(\frac{\xi_1}{a_1^2},\frac{\xi_2}{a_2^2}\right) = \ell (\cos \psi,\sin \psi),
$$
and the tangent line at this point has the equation
$$
\frac{\xi_1 x_1}{a_1^2} + \frac{\xi_2 x_2}{a_2^2} =1.
$$
The distance from the origin to this line is
$$
\frac{1}{\sqrt{\frac{\xi_1^2}{a_1^4}+\frac{\xi_2^2}{a_2^4}}} = \frac{1}{\ell}.
$$

On the other hand, 
$$
\xi_1=a_1^2 \ell \cos\psi, \ \xi_2=a_2^2 \ell \sin\psi,
$$
and the equation of the ellipse implies that 
$$
\ell^2 = \frac{1}{a_1^2 \cos^2\psi + a_2^2 \sin^2\psi}.
$$
Therefore $h(\psi) = 1/\ell = \sqrt{a_1^2 \cos^2\psi + a_2^2 \sin^2\psi}$, as claimed.
\end{proof}

The  billiard in ellipse is integrable, and the conserved quantity, called the Joachimsthal integral, is, in the above notation, $|N| \sin \delta$ (see \cite{Ta} or \cite{AST}). This can be written as 
$$
J=\frac{\sin\delta(\psi)}{h(\psi)}.
$$
This means that, along an invariant curve of the billiard, the quantities $h$ and $\sin \delta$ are proportional.

Recall that periodic billiard orbits in ellipses come in 1-parameter families (Poncelet Porism). 
Let $\alpha_i=\pi-2\delta_i$ be the angles of a periodic billiard polygon.
The next statements are corollaries of Theorem \ref{thm1}; the second statement is case $k_{101}$ in \cite{reznik2}.

\begin{corollary} \label{cor:sums}
	For a family of billiard $n$-gons in ellipse, one has
	$$
	2\sum_{i=1}^{n}\sin^2\delta_i =J\cdot L,
	$$ 
	and hence
		$$
	\sum_{i=1}^{n} \cos\alpha_i=J\cdot L -n. \ \     
	$$
One also has
	$$
	2J\sum_{i=1}^{n} h^2(\psi_i)=L.
	$$
	Using the formula for the support function, one has
	$$
	\sum_{i=1}^{n}\cos 2\psi_i=\left(\frac{L}{J}-n(a_1^2+a_2^2)\right)/(a_1^2-a_2^2).
	$$
	The second claim of the Theorem \ref{thm1} implies that
	$$
		\sum_{i=1}^{n} \sin 2\psi_i =0.
	$$
\end{corollary}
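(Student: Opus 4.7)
The plan is to treat all five identities as straightforward consequences of combining Theorem \ref{thm1} with two extra ingredients specific to the ellipse: the Joachimsthal relation $\sin\delta_i = J\,h(\psi_i)$ and the explicit formula $h(\psi)=\sqrt{a_1^2\cos^2\psi+a_2^2\sin^2\psi}$ from Lemma \ref{lemma:supel}. The conceptually novel step is the observation that Joachimsthal lets one trade $\sin\delta$ for $h$ inside any sum taken along an invariant curve; everything else is bookkeeping.

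First I would derive statement (1) and (3) simultaneously. By Theorem \ref{thm1}(1), $\sum_i 2h(\psi_i)\sin\delta_i = L$. Substituting $\sin\delta_i = J\,h(\psi_i)$ turns the left side into $2J\sum_i h^2(\psi_i)$, which gives (3). Alternatively, substituting $h(\psi_i) = \sin\delta_i/J$ turns it into $(2/J)\sum_i \sin^2\delta_i$, which gives (1). Next, since $\alpha_i = \pi - 2\delta_i$,
\begin{equation*}
\cos\alpha_i = -\cos 2\delta_i = 2\sin^2\delta_i - 1,
\end{equation*}
so summing and using (1) immediately yields $\sum_i \cos\alpha_i = J\cdot L - n$, which is statement (2).

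For statement (4), I would use the identity
\begin{equation*}
h^2(\psi) = \frac{a_1^2+a_2^2}{2} + \frac{a_1^2-a_2^2}{2}\cos 2\psi,
\end{equation*}
summed over $i$, and equate with $L/(2J)$ coming from (3); solving for $\sum_i \cos 2\psi_i$ gives the claimed formula. Finally, for statement (5), I would differentiate the support function to get
\begin{equation*}
h'(\psi) = \frac{(a_2^2-a_1^2)\sin 2\psi}{2h(\psi)},
\end{equation*}
so $h'(\psi_i)\sin\delta_i = \tfrac{1}{2}(a_2^2-a_1^2)J\sin 2\psi_i$ after applying Joachimsthal; plugging this into Theorem \ref{thm1}(2) and dividing by the nonzero constant $\tfrac{1}{2}(a_2^2-a_1^2)J$ (assuming the ellipse is not a circle) gives $\sum_i \sin 2\psi_i = 0$.

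The hard part, if there is one, is really just the circle case $a_1=a_2$ in (4) and (5), where the formulas become vacuous ($0=0$) and must be interpreted as such — but since Reznik's identities are asserted for genuine ellipses, I would simply note this degeneracy in passing rather than treat it separately. No deeper obstacle arises.
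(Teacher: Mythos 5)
Your proposal is correct and is exactly the argument the paper intends (the paper states the corollary without writing out the computation, only indicating that it follows from Theorem \ref{thm1} via the Joachimsthal relation and the explicit support function): substituting $\sin\delta_i=J\,h(\psi_i)$ into both parts of Theorem \ref{thm1} and using $h^2(\psi)=\frac{a_1^2+a_2^2}{2}+\frac{a_1^2-a_2^2}{2}\cos 2\psi$ and $h'(\psi)\,h(\psi)=\frac{a_2^2-a_1^2}{2}\sin 2\psi$ yields all five identities. Your remark about the degenerate circle case is a reasonable aside and does not affect the validity of the proof.
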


Finally, consider a periodic billiard trajectory in an ellipse. The tangent lines at the impact points form a new polygon whose angles are denoted by $\beta_i$, see Figure \ref{exter}. 

\begin{figure}[h]
	\centering
	\includegraphics[width=0.5\linewidth]{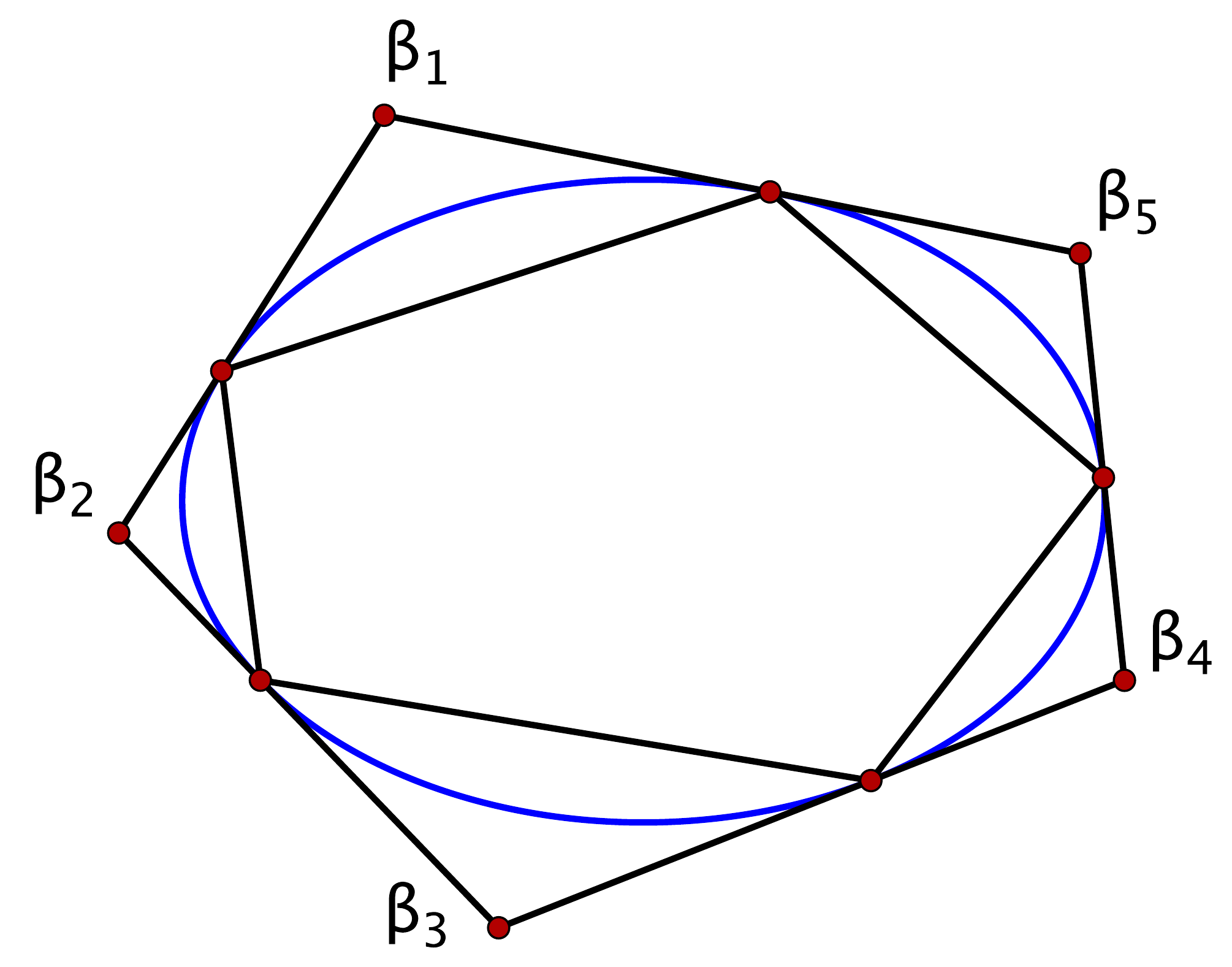}
	\caption{To Theorem \ref{thm:product}.}
	\label{exter}
\end{figure}

 \begin{theorem}[case $k_{102}$ in \cite{reznik2}] \label{thm:product}
	In a 1-parameter family of billiard $n$-gons in ellipse, one has
	$$ \prod_{i=1}^{n}\cos \beta_i= const.
$$
\end{theorem}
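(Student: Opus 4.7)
The plan is to reduce $\cos\beta_i$ to an expression in the Joachimsthal integral, interpret the outer polygon as a billiard polygon in a confocal ellipse, and argue constancy using the integrable structure of the Poncelet family.

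For the reduction, the tangents to $\gamma$ at $M_i$ and $M_{i+1}$ have outer normals in directions $\psi_i$ and $\psi_{i+1}$, so the interior angle of the outer polygon at $V_i$ equals $\pi-(\psi_{i+1}-\psi_i)$, giving
\[
\cos\beta_i=-\cos(\psi_{i+1}-\psi_i)=-\cos(\delta_i+\delta_{i+1})
\]
via the identity $\psi_{i+1}-\psi_i=(\varphi_{i+1}-\varphi_{i-1})/2=\delta_i+\delta_{i+1}$. The Joachimsthal relation $\sin\delta_i=J\,h(\psi_i)$ then yields
\[
-\cos\beta_i=\sqrt{(1-J^2h(\psi_i)^2)(1-J^2h(\psi_{i+1})^2)}-J^2h(\psi_i)h(\psi_{i+1}),
\]
so the task is to show that this product, over $i=1,\ldots,n$, is the same for every orbit in the Poncelet family.

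For the duality, the vertex $V_i$ is the pole, with respect to $\gamma$, of the chord $M_iM_{i+1}$; since that chord is tangent to the common confocal caustic $C$ of the family, the polar image of $C$ is a confocal conic $\gamma'$ on which every $V_i$ lies. Thus the outer polygon $V_1\cdots V_n$ is itself a billiard polygon inscribed in $\gamma'$ with caustic $\gamma$, and its vertex angles are exactly the $\beta_i$; as the inner polygon runs over its Poncelet family, the outer polygon runs over the Poncelet family for the pair $(\gamma',\gamma)$. This makes $\beta_i$ a billiard-theoretic quantity to which the machinery of Theorem \ref{thm1} applies.

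To promote this into the desired product identity, I would parametrize the invariant curve $\{J=\text{const}\}$ by an angle variable $u$ (a point on the elliptic curve attached to the confocal pencil), on which the billiard map $T$ acts as translation with rational rotation number $k/n$. The function $u\mapsto \sum_{i=0}^{n-1}\log|\cos\beta(T^i u)|$ then descends to a meromorphic function on the quotient by the $n$-torsion subgroup; if this descended function has no poles, it is constant by Liouville's theorem, which is precisely the asserted conservation law. The hardest step will be verifying this pole-cancellation: the individual summands have logarithmic singularities where $\beta=\pi/2$, and one must show they are killed by the $n$-fold trace. I expect this to follow from combining the second identity of Theorem \ref{thm1} with the Joachimsthal constraint; an alternative would be to differentiate $\log\prod\cos\beta_i$ directly along the one-parameter family and exploit the Joachimsthal-constrained propagation of infinitesimal variations $d\varphi_i/dt$ around the orbit to show the derivative vanishes.
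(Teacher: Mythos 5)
Your reduction $\cos\beta_i=-\cos(\delta_i+\delta_{i+1})$ is correct and is exactly the paper's starting point, but after that the proposal does not contain a proof. Two specific problems. First, the duality step is wrong as stated: the vertex $V_i$ is indeed the pole of the chord $M_iM_{i+1}$ with respect to $\gamma$, so the $V_i$ lie on the polar dual of the caustic $C$ with respect to $\gamma$ --- but that dual conic is \emph{not} confocal with $\gamma$ (a direct computation with $C=\{x^2/(a^2-\lambda)+y^2/(b^2-\lambda)=1\}$ shows the dual has semi-axes $a^2/\sqrt{a^2-\lambda},\,b^2/\sqrt{b^2-\lambda}$, which is confocal only for $\lambda=0$). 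So the outer polygon is a Poncelet polygon for a non-confocal pair, not a billiard polygon, and the $\beta_i$ are not reflection angles; Theorem \ref{thm1} does not apply to it. Second, and decisively, the constancy of the product is never established: the Liouville strategy is announced with its key step (cancellation of singularities) explicitly deferred, and as formulated it is not even well posed, since $\sum_i\log|\cos\beta_i|$ is not a meromorphic function (logarithms of meromorphic functions have branch points, not poles), so one would have to work with $\prod_i\cos\beta_i$ itself and control its zeros and poles on the complexified invariant curve --- none of which is done.

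The "alternative" you mention in your last sentence is in fact the paper's proof, and the missing ingredient is concrete: parametrizing the family by $\psi:=\psi_1$ and combining the Joachimsthal relations $Jh=\sin\delta$, $Jh'=\delta'\cos\delta$ with equation (\ref{h}), one gets the propagation law
$$
\frac{d\psi_{i+1}}{d\psi_i}=\frac{1+\delta'(\psi_i)}{1-\delta'(\psi_{i+1})}=\frac{\sin 2\delta_{i+1}}{\sin 2\delta_i},
\qquad\text{hence}\qquad
\frac{d\psi_i}{d\psi_j}=\frac{\sin 2\delta_i}{\sin 2\delta_j}.
$$
With this, the logarithmic derivative of $\prod_i\cos\beta_i$ reduces to $\frac{2}{\sin 2\delta_1}\sum_i\sin\beta_i\,\sin(\delta_{i+1}-\delta_i)=\frac{1}{\sin 2\delta_1}\sum_i(\cos 2\delta_i-\cos 2\delta_{i+1})$, which telescopes to zero. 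You would need to supply this (or an equivalent) computation; without it the proposal is a plan, not a proof.
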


\begin{proof}
Note that
$$
\beta_i=\pi-(\psi_{i+1}-\psi_i)=\pi-(\delta_{i+1}+\delta_i).
$$

In the 1-parameter family of billiard $n$-gons circumscribing a confocal ellipse, the reflection angle $\delta$ is a function of the normal direction $\psi$. 
Let us parameterize  the family of $n$-gons by $\psi_1=:\psi$.

	Differentiating the relation
	$$
	\psi_{i+1}-\psi_i=\delta_{i+1}+\delta_i,
	$$ 
	we obtain
	$$
	\frac{d\psi_{i+1}}{d\psi_i}=\frac{1+\delta' (\psi_i)}{1-\delta'(\psi_{i+1})}.
	$$
	
	On the other hand, substituting  
	$$
	J h=\sin\delta, \quad J h'=\cos\delta\cdot\delta'
	$$
	in equation (\ref{h}), 	we arrive at the identity
	$$
	\frac{d\psi_{i+1}}{d\psi_i}=\frac{1+\delta' (\psi_i)}{1-\delta'(\psi_{i+1})}=\frac{\sin 2\delta_{i+1}}{\sin 2\delta_i}.
	$$
	Multiplying these equations implies
	\begin{equation} \label{sin}
	\frac{d\psi_{i}}{d\psi_j}=\frac{\sin 2\delta_{i}}{\sin 2\delta_j}
	\end{equation}
	for not necessarily consecutive $i$ and $j$.
	
	Next we compute the derivative using (\ref{sin}) 
	\begin{equation*}
	\begin{split}
	\frac{d \cos\beta_i}{d\psi}&=-\sin\beta_i \frac{d\beta_i}{d\psi_i}\frac{d\psi_i}{d\psi_1}= -\sin\beta_i \left(1-\frac{\sin 2\delta_{i+1}}{\sin 2\delta_i}\right)\frac{\sin 2\delta_{i}}{\sin 2\delta_1}\\
	&=\frac{\sin\beta_i }{\sin 2\delta_1}\left({\sin 2\delta_{i+1}}-\sin 2\delta_{i}\right)
	= \frac{\sin\beta_i }{\sin 2\delta_1}2\cos(\delta_{i+1}+\delta_i)\sin( \delta_{i+1}-\delta_{i})\\
	&=\frac{\sin\beta_i }{\sin 2\delta_1}2\cos\beta_i\sin( \delta_{i+1}-\delta_{i}).
	\end{split}
	\end{equation*}

		Now we are ready to compute the derivative of the product:
	\begin{equation*}
	\begin{split}
	\frac{d}{d\psi} \prod_{i=1}^{n} \cos \beta_i&=
	\prod_{i=1}^{n} \cos\beta_i \left(\sum_{i=1}^{n} \frac{d \cos\beta_i}{d\psi} \frac{1}{\cos\beta_i}\right)\\
	 &=\prod_{i=1}^{n} \cos\beta_i\left(\sum_{i=1}^{n}{\sin\beta_i }\sin( \delta_{i+1}-\delta_{i})\right)	\frac{2}{\sin 2\delta_1}.
	\end{split}
	\end{equation*}
	It remains to notice that the sum in the parentheses equals zero:
		\begin{equation*}
	\begin{split}
	2\sum_{i=1}^{n}{\sin\beta_i }\sin( \delta_{i+1}-\delta_{i})&=2\sum_{i=1}^{n}{\sin(\delta_{i+1}+\delta_{i})}\sin( \delta_{i+1}-\delta_{i})\\
	&=\sum_{i=1}^{n} (\cos 2\delta_{i}-\cos2\delta_{i+1})=0.
	\end{split}
	\end{equation*}
	This completes the proof.
	\end{proof}
	
Unlike Theorem \ref{thm1}, we do not know the value of the constant in Theorem \ref{thm:product}.	

\section{Further conservation laws}

Let $M_i, i=1,..,n, M_{n+1}=M_1$ be a billiard $n$-gon in an ellipse $\gamma$, and let $\ell_i$ be the tangent line to $\gamma$ at $M_i$. Choose a point $P$, and let $Q_i$ be the foot of the perpendicular dropped from $P$ in $\ell_i$.

\begin{theorem}[cases $k_{306}$ and $k_{302}$ in \cite{reznik2}] \label{thm:cm}
The center of mass of points $Q_i$ and sum $\sum_{i=1}^n |PQ_i|^2$
remain fixed as $M$ varies in the 1-parameter family of $n$-periodic billiard orbits.
\end{theorem}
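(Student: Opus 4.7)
The plan is to reduce both claims to a few explicit sums in $\psi_i$ and $h(\psi_i)$ and then evaluate each using the Joachimsthal relation $Jh=\sin\delta$ together with Theorem \ref{thm1} and Corollary \ref{cor:sums}. First I would write the tangent line as $\ell_i=\{x\in\mathbb R^2:\ x\cdot n_i=h(\psi_i)\}$ with $n_i=(\cos\psi_i,\sin\psi_i)$, so that the foot of perpendicular from $P$ to $\ell_i$ is
$$
Q_i=P+\bigl(h(\psi_i)-P\cdot n_i\bigr)n_i.
$$
Summing gives
$$
\sum_{i=1}^n Q_i=nP+\sum_{i=1}^n h(\psi_i)\,n_i-\sum_{i=1}^n(P\cdot n_i)\,n_i,
$$
and $|PQ_i|^2=(h(\psi_i)-P\cdot n_i)^2$ expands as
$$
\sum_{i=1}^n|PQ_i|^2=\sum_{i=1}^n h^2(\psi_i)-2P\cdot\sum_{i=1}^n h(\psi_i)\,n_i+\sum_{i=1}^n(P\cdot n_i)^2.
$$

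So the two target quantities are built from four basic sums: $\sum h^2(\psi_i)$, $\sum h(\psi_i)\,n_i$, $\sum(P\cdot n_i)\,n_i$, and $\sum(P\cdot n_i)^2$. The first equals $L/(2J)$ by Corollary \ref{cor:sums}. The last two expand into linear combinations of $n$, $\sum\cos 2\psi_i$, and $\sum\sin 2\psi_i$ via $2\cos^2\psi=1+\cos 2\psi$ and $2\cos\psi\sin\psi=\sin 2\psi$, and all three of these quantities are constants of the family (indeed $\sum\sin 2\psi_i=0$) again by Corollary \ref{cor:sums}. Thus the whole theorem reduces to showing that the vector $\sum_i h(\psi_i)\,n_i$ is constant along the Poncelet family.

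This is the main step, and I expect it to be in fact identically zero. I would pass to complex notation: set $Z=\sum_i h(\psi_i)\,e^{\I\psi_i}$ and substitute the Joachimsthal relation $Jh(\psi_i)=\sin\delta_i$ to obtain
$$
JZ=\sum_i\sin\delta_i\,e^{\I\psi_i}=\frac{1}{2\I}\sum_i\bigl(e^{\I(\psi_i+\delta_i)}-e^{\I(\psi_i-\delta_i)}\bigr).
$$
Since $\psi_i\pm\delta_i$ are exactly the edge normal directions $\varphi_i$ and $\varphi_{i-1}$ adjacent to $M_i$, the right-hand side telescopes along the $n$-periodic orbit (the winding number only shifts $\varphi_n$ by a multiple of $2\pi$, which is invisible to the exponential), so $Z=0$. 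The main obstacle is recognizing that this telescoping identity is what makes the argument work; once it is in hand, $\frac{1}{n}\sum Q_i$ depends only on $P$ and on the constants produced by Corollary \ref{cor:sums}, while $\sum|PQ_i|^2$ is a constant determined by $P$, $L/J$, and $\sum\cos 2\psi_i$, proving both parts of the theorem.
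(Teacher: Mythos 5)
Your proposal is correct and follows essentially the same route as the paper: the decisive identity $\sum_i \sin\delta_i\, e^{\I\psi_i}=0$ is exactly the paper's observation that $\psi_i\pm\delta_i$ are the normal directions of consecutive edges, so the cyclic sum telescopes, and the remaining sums are handled by Corollary \ref{cor:sums} just as in the paper. The only differences are cosmetic (complex notation, and treating general $P$ in one pass rather than first centering at $O$ and then translating).
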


\begin{proof}
The proof consists of two parts: first,  we show that the statement holds when $P$ is the center of the ellipse, and then we show that it holds for any other point.

We continue to use the notation $h(\psi)$ for the support function of $\gamma$ and $\delta$ for the reflection angle.
The foot point of the perpendicular from the center to the tangent line with  the normal  direction $\psi$ is $(h(\psi) \cos \psi, h(\psi) \sin \psi)$. As we know, up to a multiplicative constant, $h(\psi) = \sin \delta$. 

For the first statement, we  show that
$$
\sum_{i=1}^n \sin \delta_i \cos \psi_i = \sum_{i=1}^n \sin \delta_i \sin \psi_i =0,
$$
that is, the center of mass is at the origin.  Using trigonometry,  this is equivalent to  
$$
\sum_{i=1}^n [\sin(\psi_i-\delta_i) - \sin(\psi_i+\delta_i)] = \sum_{i=1}^n [\cos(\psi_i-\delta_i) - \cos(\psi_i+\delta_i)]  =0.
$$
But $\psi_i\pm \delta_i$ are the normal directions to the consecutive sides of a billiard $n$-gon $M$. Therefore both cyclic sums indeed vanish.

For the second statement, $\sum |PQ_i|^2 = \sum h^2(\psi_i)$, and this is constant by the first statement of Theorem \ref{thm1}.

Next, if point $P$ is translated from the origin to point $(a,b)$, then the support numbers $|PQ_i|$ are changed by $a\cos\psi_i+b\sin\psi_i$,
and 
\begin{equation*} 
\begin{split}
&\sum_{i=1}^n |PQ_i|^2 = \sum_{i=1}^n [h(\psi_i)-(a\cos\psi_i+b\sin\psi_i)]^2 =\\
&\sum_{i=1}^n h^2(\psi_i) - 2 \sum_{i=1}^n h(\psi_i) (a\cos\psi_i+b\sin\psi_i) + \sum_{i=1}^n (a\cos\psi_i+b\sin\psi_i)^2.
\end{split}
\end{equation*} 
Therefore to show that this sum is constant in the 1-parameter family of $n$-periodic orbits, it suffices to show that constant are the individual sums 
$$
\sum_{i=1}^n h(\psi_i) \cos\psi_i,\ \sum_{i=1}^n h(\psi_i) \sin\psi_i,\  \sum_{i=1}^n \cos^2\psi_i,\ \sum_{i=1}^n \cos\psi_i \sin\psi_i,\ \sum_{i=1}^n   \sin^2\psi_i.
$$
As we showed above, the first two sums vanish, and the the remaining three, using some trigonometry, are constant by Corollary \ref{cor:sums}. 

Likewise, when point $P$ is translated from the origin to point $(a,b)$, the feet of the perpendiculars $Q_i$ are translated by the vectors 
$$
[(a,b)\cdot (\sin\psi_i,-\cos\psi_i)] (\sin\psi_i,-\cos\psi_i).
$$
Therefore to show that the center of mass remains constant in the 1-parameter family of $n$-periodic orbits, it suffices to show that constant are the individual sums
$$
\sum_{i=1}^n \cos^2\psi_i,\ \sum_{i=1}^n \cos\psi_i \sin\psi_i,\ \sum_{i=1}^n   \sin^2\psi_i.
$$
This we already know, and this completes the proof.
\end{proof}

As a preparation to the proof of the next theorem, we describe another integral of the billiard inside an ellipse. This integral is known to specialists, but we do not know a reference. 

\begin{lemma} \label{lm:prod}	
The product of the distances from the foci of an ellipse to the segments of a billiard trajectory is an integral of the billiard map.
\end{lemma}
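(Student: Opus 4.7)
The plan is to reduce the statement to a purely geometric fact about tangent lines of confocal conics, using the integrability of the elliptic billiard. Recall that every segment of a billiard trajectory inside $\gamma$ is tangent to a fixed confocal conic — the caustic — whose parameter is essentially the Joachimsthal integral. Hence to prove the lemma it suffices to show that for every tangent line $\ell$ to a conic confocal with $\gamma$, the product of the distances from the foci $F_1,F_2$ of $\gamma$ to $\ell$ depends only on the confocal parameter, not on the point of tangency. This reformulation makes the invariance under the billiard map immediate, since all segments of a given orbit share the same caustic.

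I would then carry out the computation directly. Take $\gamma=\{x_1^2/a_1^2+x_2^2/a_2^2=1\}$ with foci $(\pm c,0)$, $c^2=a_1^2-a_2^2$, and the confocal ellipse $\gamma_\lambda=\{x_1^2/(a_1^2-\lambda)+x_2^2/(a_2^2-\lambda)=1\}$. Write the tangent line at $(\sqrt{a_1^2-\lambda}\cos t,\sqrt{a_2^2-\lambda}\sin t)$ in the form $\sqrt{a_2^2-\lambda}\,x_1\cos t+\sqrt{a_1^2-\lambda}\,x_2\sin t=\sqrt{(a_1^2-\lambda)(a_2^2-\lambda)}$ and compute the two distances from $F_{1,2}=(\pm c,0)$. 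In the product, the key simplification is
$$
(a_1^2-\lambda)-c^2\cos^2 t=(a_1^2-\lambda)\sin^2 t+(a_2^2-\lambda)\cos^2 t,
$$
which uses $c^2=(a_1^2-\lambda)-(a_2^2-\lambda)$ and exactly cancels the denominator of the distance formula, leaving the product equal to $a_2^2-\lambda$. A confocal hyperbolic caustic ($\lambda>a_2^2$) is handled analogously with a hyperbolic-trigonometric parametrization, yielding $\lambda-a_2^2$. In either case the product depends only on the caustic, as required.

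The computation itself is routine, so the only real content is the conceptual step of recognizing the product of focal distances as a function of the confocal parameter alone; once this is established, integrability does the rest. As a by-product, this identifies the value of the integral as $|a_2^2-\lambda|$, the square of the semi-minor axis of the caustic.
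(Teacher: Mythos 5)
Your proof is correct and coincides in substance with the paper's first (analytic) argument: both reduce the claim to showing that the product of the focal distances to any tangent line of the confocal caustic equals the square of its semi-minor axis, the paper doing the computation via the support function $h(\alpha)$ of the caustic and you via an explicit parametrization. (The paper also gives a second, purely geometric proof using the equal-angles property of the foci under reflection, which avoids invoking the caustic altogether.)
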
	

\begin{proof} We present two arguments, an analytic and a geometric ones. 

Consider a segment of the billiard trajectory tangent to a confocal ellipse $\gamma$ with the semi-axes $b<a$. Let $d_1,d_2$ be the distances from the foci to the segment and $\alpha$ be the direction of its normal. Then 
$$
d_1=h(\alpha)-c\cos\alpha,\  d_2=h(\alpha)+c\cos\alpha,
$$ 
where $h$ is the support function of $\gamma$ and $c^2=a^2-b^2$.
From here, using the formula for the support function from Lemma (\ref{lemma:supel}), we get
$$
\Pi:=d_1d_2=h^2(\alpha)-c^2\cos^2 \alpha=a^2\cos^2\alpha+b^2\sin^2\alpha- c^2 \cos^2 \alpha=b^2.
$$
		
For the geometric argument, consider
Figure \ref{reflection}. The distance from $F_1$ to $AC$ is $|F_1C| \sin (\angle F_1CA)$, and from $F_2$ to $AC$ is $|F_2C| \sin (\angle F_2CA)$. The product of the two is
$$
|F_1C||F_2C| \sin (\angle F_1CA)\sin (\angle F_2CA).
$$ 
Likewise, the product of the distances to $BC$ is
$$
|F_1C||F_2C| \sin (\angle F_1CB)\sin (\angle F_2CB).
$$
It remains to notice that $\angle F_1CA = \angle F_2CB$ and $\angle F_2CA = \angle F_1CB$.
\end{proof}

\begin{figure}[h]
	\centering
	\includegraphics[width=0.4\linewidth]{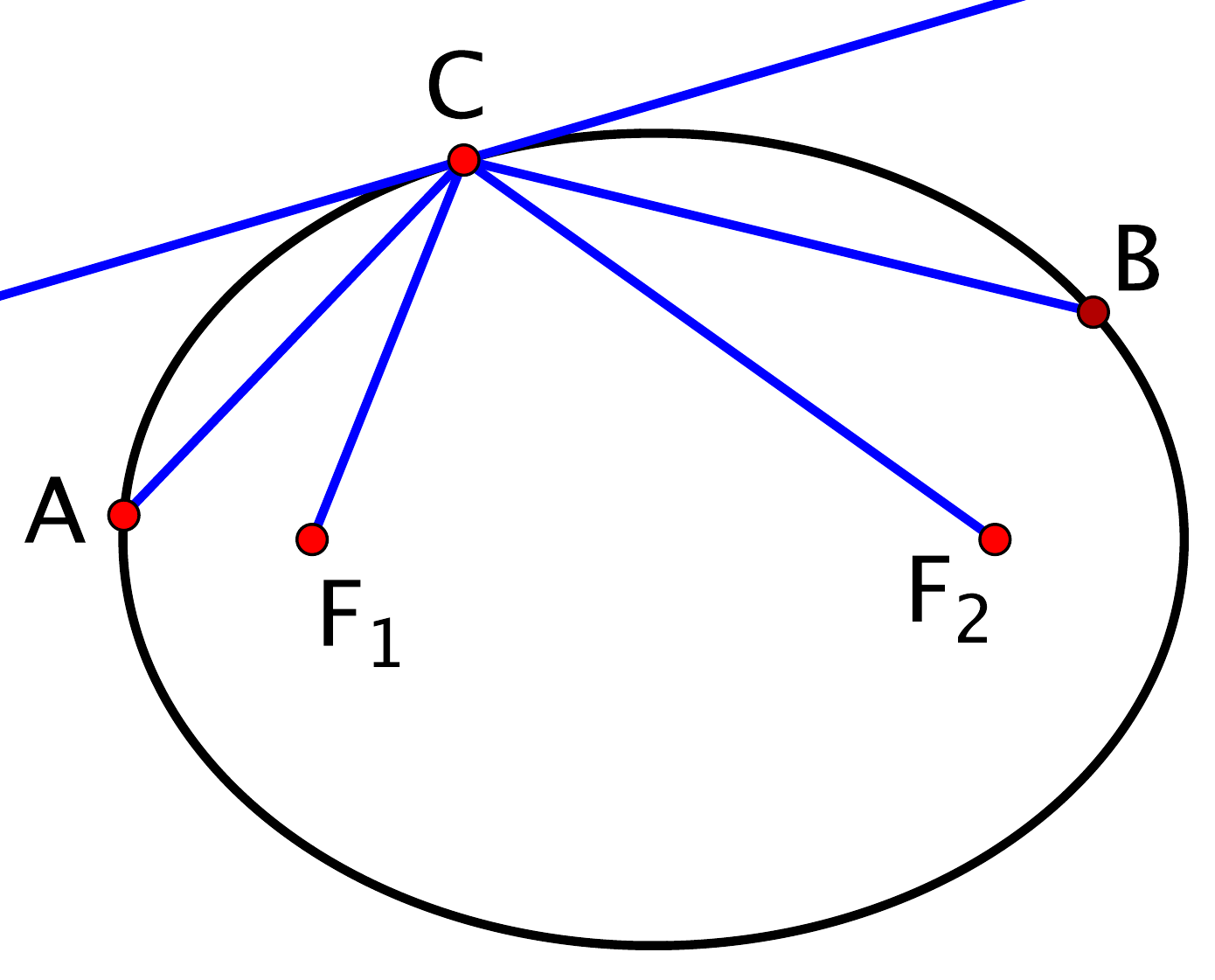}
	\caption{To Lemma \ref{lm:prod}.}
	\label{reflection}
\end{figure}
	
The next results are also among the ones experimentally discovered in \cite{reznik2}.

Let $M_i, i=1,..,n, M_{n+1}=M_1$ be a billiard $n$-gon in an ellipse, circumscribing the confocal ellipse $\gamma$ with the semi-axes $b<a$, and let $R_i$ be the foot of the perpendicular dropped from its center $O$ on the line $M_i M_{i+1}$. Let $F_{1,2}$ be the foci of the ellipse.

\begin{theorem}[cases $k_{202,a}$ and $k_{202,b}$ in \cite{reznik2}] \label{thm:proddist}
(1) If $n$ is even, then $\prod_{i=1}^n |F_1 R_i|$ and $\prod_{i=1}^n |F_2 R_i|$ are constant and
equal to $b^{n}$ as $M$ varies in the 1-parameter family of $n$-periodic billiard orbits.\\
(2) If $n$ is divisible by 4, then $\prod_{i=1}^n |O R_i|$ is constant 
and equal $(ab)^{\frac{n}{2}}$
 as $M$ varies in the 1-parameter family of $n$-periodic billiard orbits.
\end{theorem}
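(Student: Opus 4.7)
The starting point for both parts is Lemma~\ref{lm:prod}. Writing $\eta_i$ for the outer normal to the side $M_iM_{i+1}$ and $h$ for the support function of the caustic $\gamma$, one has $|F_jR_i|=h(\eta_i)\mp c\cos\eta_i$ and $|OR_i|=h(\eta_i)$; the lemma gives the pointwise identity $|F_1R_i|\cdot|F_2R_i|=b^2$, so in particular
\[
\prod_{i=1}^{n}|F_1R_i|\cdot\prod_{i=1}^{n}|F_2R_i|=b^{2n}
\]
unconditionally.

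For part~(1), I would first show that each factor $\prod|F_jR_i|$ is independent of the orbit in the Poncelet family by computing $\tfrac{d}{d\psi_1}\log\prod|F_jR_i|$ and verifying that the resulting cyclic sum telescopes to zero --- the same strategy as in Theorem~\ref{thm:product}, using the chain rule, identity~(\ref{sin}) $d\psi_i/d\psi_1=\sin 2\delta_i/\sin 2\delta_1$, and the Joachimsthal substitutions $\sin\delta=Jh_\Gamma$, $\cos\delta\cdot\delta'=Jh'_\Gamma$. Next, the reflection $\sigma$ across the major axis swaps $F_1\leftrightarrow F_2$ and preserves the family; for $n$ even, the family contains a polygon fixed by $\sigma$ (e.g.\ the orbit whose first vertex is at $(A,0)$). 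On such a polygon the two factors coincide, forcing $(\prod|F_1R_i|)^2=b^{2n}$ there, and by constancy $\prod|F_jR_i|=b^n$ on every orbit.

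For part~(2), the quantity $\prod|OR_i|=\prod h(\eta_i)$ admits no pointwise reduction. I would still establish that it is a family invariant via the same derivative computation; the sum does not telescope via~(\ref{h}) alone but should close up using (\ref{h}), (\ref{sin}) and the parity sums $\sum\sin 2\psi_i=0$ and $\sum\cos 2\psi_i=\text{const}$ from Corollary~\ref{cor:sums}, which is where I expect the hypothesis $4\mid n$ to become essential. Granted constancy, one evaluates on a symmetric orbit. For $n=4$ the family contains the rhombus $(A,0)\to(0,B)\to(-A,0)\to(0,-B)$ whose four sides lie at distance $AB/\sqrt{A^2+B^2}$ from the centre, yielding $\prod|OR_i|=A^4B^4/(A^2+B^2)^2=a^2b^2=(ab)^{n/2}$ once one uses the $4$-periodic caustic relations $a^2=A^4/(A^2+B^2)$, $b^2=B^4/(A^2+B^2)$. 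For general $n=4k$ the family contains a polygon invariant under the Klein four-group of axis reflections; its sides fall into $k$ orbits of four under this action, and the invariance of $h^2(\eta)=a^2\cos^2\eta+b^2\sin^2\eta$ under $\eta\mapsto-\eta$ and $\eta\mapsto\pi-\eta$ should reduce each orbit's contribution to $(ab)^2$.

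The main obstacle is the invariance of $\prod|OR_i|$ in part~(2): without a pointwise identity the vanishing of the logarithmic derivative must be forced through a combination of (\ref{h}), (\ref{sin}) and the Corollary~\ref{cor:sums} parity sums, with the divisibility $4\mid n$ apparently playing a decisive role. The secondary difficulty is to justify the existence of, and carry out the evaluation on, the doubly symmetric orbit for $n=4k$.
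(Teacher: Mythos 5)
Your plan has the right pointwise input (Lemma \ref{lm:prod} gives $\prod_i|F_1R_i|\cdot\prod_i|F_2R_i|=b^{2n}$ unconditionally), but both parts contain genuine gaps, and in each case the paper's route is different and shorter.

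For part (1): the reflection across the \emph{major} axis fixes both foci (they lie on that axis), so it cannot swap $F_1\leftrightarrow F_2$ as you claim; you would need the minor-axis reflection or the central symmetry. More importantly, the whole scheme ``prove constancy by a telescoping derivative, then evaluate on one symmetric orbit'' is an unnecessary detour whose first step you only sketch. The paper uses the standard fact that \emph{every} even-periodic billiard polygon in an ellipse is centrally symmetric; since the central symmetry swaps the foci and permutes the sides by $i\mapsto i+n/2$, one gets $|F_1R_i|=|F_2R_{i+n/2}|$ for every orbit of the family, hence $\prod_i|F_1R_i|=\prod_i|F_2R_i|$, and combined with your identity each product equals $b^n$ on every orbit. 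No derivative computation and no special orbit are needed.

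For part (2) there are two gaps. First, the constancy of $\prod_i|OR_i|$ is the entire content of the statement, and you leave it as a hope that a logarithmic-derivative sum ``should close up'' using (\ref{h}), (\ref{sin}) and Corollary \ref{cor:sums}; this is not carried out and is not routine. The paper instead reduces $n=4k$ to $n=4$ via the Poncelet Grid Theorem: the sides numbered $i$, $i+k$, $i+2k$, $i+3k$ form a parallelogram which is a $4$-periodic billiard orbit in a confocal ellipse circumscribing the same caustic, so the full product splits into $k$ quadrilateral products; the $n=4$ case is then settled by an explicit computation showing that the product of the distances from $O$ to two consecutive sides equals $ab$ for every quadrilateral of the family. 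Second, your proposed evaluation on a doubly symmetric orbit for $n=4k$ with $k>1$ is incorrect: a Klein-four-group orbit of sides has normal directions $\pm\eta$, $\pi\pm\eta$, and all four distances from $O$ equal $h(\eta)=\sqrt{a^2\cos^2\eta+b^2\sin^2\eta}$, so the group contributes $h(\eta)^4$, which equals $(ab)^2$ only when $\tan^2\eta=a/b$. That happens to hold for the rhombus when $n=4$ (your computation there, including the caustic relations $a^2=A^4/(A^2+B^2)$, $b^2=B^4/(A^2+B^2)$, is correct), but it fails for the other quadruples of sides when $k>1$, so the claimed reduction of each group's contribution to $(ab)^2$ does not work.
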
	

\begin{proof}
We start from the well known fact that an even-periodic billiard polygon is symmetric with respect to the center of the ellipse. 

For the first statement, let $n=2k$. Then, by symmetry, $|F_1 R_i|=|F_2 R_{i+k}|$ for all $i$. Therefore $\prod_{i=1}^n |F_1 R_i| = \prod_{i=1}^n |F_2 R_i|$. Furthermore, by Lemma \ref{lm:prod}, 
this product equals  $b^{n}$.

For the second statement, let $n=4k$. We claim that it suffices to prove the statement in the case $k=1$. 

Indeed, consider $i$th, $(i+k)$th, $(i+2k)$th, and $(i+3k)$th sides of the periodic $n$-gon. According to the Poncelet Grid Theorem,  \cite{Sch,L-T}, the quadrilateral (in fact, a parallelogram) made by these sides is a 4-periodic billiard trajectory in a confocal ellipse. If we know that the product in question is invariant for $n=4$, then this product for $n=4k$ is $k$th power of the one for $n=4$, and hence invariant as well.

It remains to prove the statement for periodic quadrilaterals.  For want of a geometric argument, we present two analytic proofs.

{\it The first proof.}
A pair of confocal ellipses possessing a Poncelet quadrilateral are given by the formulas
\begin{equation} \label{eq:pair}
\frac{x^2}{a^2}+ \frac{y^2}{b^2}=1,\ \frac{x^2}{A^2}+ \frac{y^2}{B^2}=1
\end{equation}
with $A^2=a^2+ab, B^2=b^2+ab$. These relations become evident if one considers a rectangle that circumscribes the first ellipse and whose sides are parallel to its axes. 

Let $P$ and $Q$ be adjacent vertices of a billiard quadrilateral. Then the tangent lines at $P$ and $Q$ are orthogonal and their intersection point lies on the orthoptic circle centered at $O$, see \cite{C-Z}. This is illustrated in Figure \ref{quad}.

The distance from $O$ to $PQ$ equals $[P,Q]/|P-Q|$. The next vertex after $Q$ is $-P$, therefore we need to show the invariance of
\begin{equation} \label{eq:rat}
\frac{[P,Q]^2}{|P-Q||P+Q|}
\end{equation}
as point $P$ varies.

Let $P=(A\cos\alpha,B\sin\alpha), Q=(A\cos\beta,B\sin\beta)$. Then the normals at points $P$ and $Q$ are given by $(\cos\alpha /A,\sin\alpha /B)$ and $(\cos\beta /A,\sin\beta /B)$. As we mentioned, these normals are orthogonal, hence
$$
\frac{\cos\alpha\cos\beta}{A^2} +\frac{\sin\alpha\sin\beta}{B^2} =0.
$$
Using some trigonometry, we rewrite this as $(A^2+B^2)\cos(\alpha-\beta)=(A^2-B^2)\cos(\alpha+\beta)$, or as
\begin{equation} \label{eq:ort}
(a+b)\cos(\alpha-\beta)=(a-b)\cos(\alpha+\beta).
\end{equation}
This equation describes the relation between points $P$ and $Q$.

Now consider (\ref{eq:rat}). Since $[P,Q]=AB\sin^2(\alpha-\beta)$, the numerator equals $A^2B^2\sin^2(\alpha-\beta)$. To compute the denominator, use the formula
$$
|P-Q||P+Q| = \frac{1}{2} (|P-Q|+|P+Q|)^2 -|P|^2-|Q|^2.
$$
The sum $|P-Q|+|P+Q|$ is the semi-perimeter of the billiard polygon, a conserved quantity equal, in our case, to $2(a+b)$. Hence, again using trigonometry and (\ref{eq:ort}),
\begin{equation*}
\begin{split}
&|P-Q||P+Q| = 2(a+b)^2 - A^2 \cos^2\alpha - B^2\sin^2\alpha - A^2\cos^2\beta - B^2\sin^2\beta=\\
&(a+b)^2 + (b^2-a^2) \cos(\alpha+\beta)\cos(\alpha-\beta) = (a+b)^2 [1-\cos^2(\alpha-\beta)]= \\
&(a+b)^2\sin^2(\alpha-\beta).
\end{split}
\end{equation*}
It follows that (\ref{eq:rat}) equals $ab$ which implies the statement of the theorem. 
\medskip

\begin{figure}[h]
	\centering
	\includegraphics[width=0.4\linewidth]{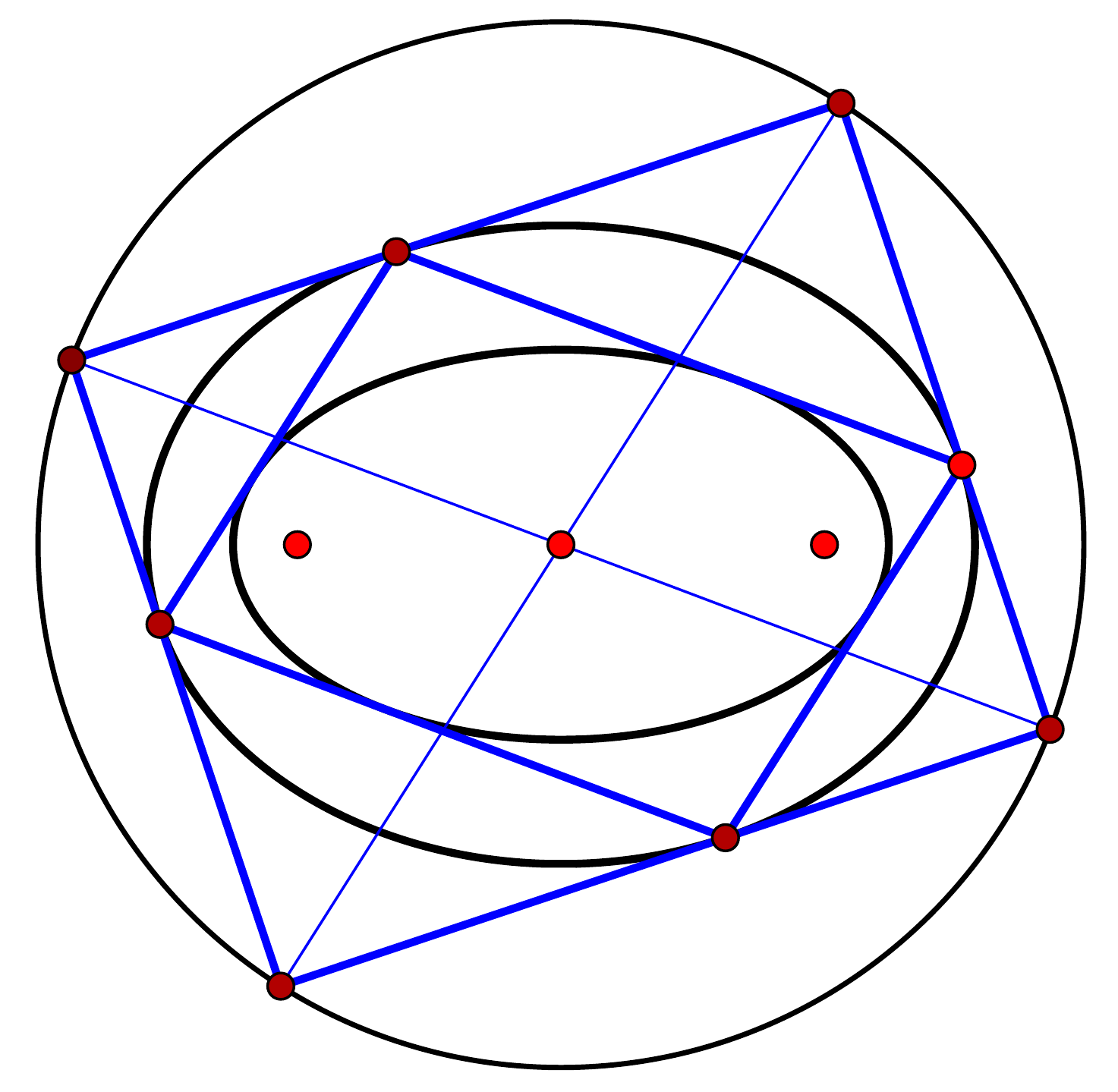}
	\caption{To Theorem \ref{thm:proddist}.}
	\label{quad}
\end{figure}

{\it Second proof}. We make use of  support functions, in accordance with our approach to billiards in this paper. 

As before, consider two ellipses (\ref{eq:pair}), and let $H$ be the support function of the outer one. 
Let $\psi_{i}$ be the angles of the normal at the vertices of the parallelograms, $\delta(\psi)$ the function of reflective angle of the family of the parallelograms.

For a fix a parallelogram, the angles of the normals are 
$$\psi_1,\psi_2,\psi_1+\pi,\psi_2+\pi.$$ 
Denote by $p_1, p_2$ the distances from the origin to the sides of the parallelogram.
By Corollary \ref{cor:sums}, we have in our case
$$
\sin(2\psi_1)=-\sin(2\psi_2).
$$
This implies $\psi_2=\psi_1+\frac{\pi}{2}$. This means that the tangents to $E$ at the vertices of the parallelogram form a rectangle, reproving the observation made in \cite{C-Z}, see Figure \ref{quad}. In addition, 
$$
\delta (\psi_2)+\delta(\psi_1)=\frac{\pi}{2}.
$$

Moreover it follows from equation (\ref{h}) that
$$p_1=H(\psi_1) \cos \delta(\psi_1) +H'(\psi_1) \sin \delta(\psi_1)=H(\psi_2) \cos \delta(\psi_2) -H'(\psi_2) \sin \delta(\psi_2),$$
$$
p_2=H(\psi_1) \cos \delta(\psi_1) -H'(\psi_1) \sin \delta(\psi_1)=H(\psi_2) \cos \delta(\psi_2)+H'(\psi_2) \sin \delta(\psi_2).
$$
This leads to
$$
H(\psi_2)\cos\delta(\psi_2)=H(\psi_1)\cos\delta(\psi_1).
$$
Hence
\begin{equation}\label{delta}
\tan\delta(\psi_1)= \frac{H(\psi_1+\frac{\pi}{2})}{H(\psi_1)}=
\left(\frac{A^2\sin^2\psi_1+B^2\cos^2\psi_1}{A^2\cos^2\psi_1+B^2\sin^2\psi_1}\right)^\frac{1}{2}=\left(\frac{1+ky}{k+y}\right)^
\frac{1}{2},
\end{equation}
where $k:=A^2/B^2=a/b, y:=\tan^2\psi_1.$
From the explicit expressions for $p_1, p_2$ we get:
$$p_1p_2=H(\psi_1)^2\cos^2\delta(\psi_1)-
H'(\psi_1)^2\sin^2\delta(\psi_1).$$
Using Joachimsthal integral, we can substitute in the last formula
$$
H=\frac{1}{J}\sin\delta, H'=\frac{1}{J}\delta'\cos\delta.
$$
We  get
$$
p_1p_2=\frac{1}{J^2}(\sin^2\delta\cos^2\delta-(\delta'\sin\delta\cos\delta)^2)=\frac{1}{J^2}\left(z(1-z)-
\left[\frac{z}{2}'\right]^2\right),$$ where we abbreviate $z:=\sin^2\delta$.

Next we compute $z$ via $y$ using formula (\ref{delta}):
$$                           
z=\frac{1+ky}{(1+k)(1+y)}.
$$
Also,
$$
z'=\frac{k-1}{(k+1)(1+y)^2}\cdot\frac{dy}{d\psi}=\frac{k-1}{(k+1)^2(1+y)^2}\cdot 2\sqrt y(1+y)=\frac{k-1}{(k+1)(1+y)}\cdot 2\sqrt y.
$$

We substitute and get:
$$
p_1p_2=\frac{1}{J^2}\left[\frac{(1+ky)(k+y)}{(1+k)^2(1+y)^2}-
\frac{(k-1)^2y}{(k+1)^2(1+y)^2}\right]=\frac{k}{J^2(k+1)^2}.
$$
To finish the proof, we note that $J=1/(a+b)$ (which is again clear by considering a rectangle that circumscribes the inner ellipse and whose sides are parallel to its axes). This yields $p_1p_2=ab$.
\end{proof}

\end{document}